\theoremstyle{plain}
\newtheorem{thm}{Theorem}[section]
\newtheorem{theorem}[thm]{Theorem}
\newtheorem{lemma}[thm]{Lemma}
\newtheorem{proposition}[thm]{Proposition}
\theoremstyle{definition}
\newcommand{\Z}{\mathbb{Z}}
\newcommand{\AG}[1]{\mathrm{AGG}#1}
\newcommand{\ld}{\backslash}
\numberwithin{equation}{section}
\begin{document}

\title{Linear representation of Abel-Grassmann groups}

\author{David Stanovsk\'y}

\address{Department of Information Systems and Mathematical Modeling, International IT University, Manas st. 34, 050040 Almaty, Kazakhstan}
\address{Department of Algebra, Faculty of Mathematics and Physics, Charles University, Sokolovsk\'a 83, 18675 Praha 8, Czech Republic}

\email{david.stanovsky@gmail.com}

\thanks{Partly supported by the GA\v CR grant 13-01832S}

\keywords{Abel-Grassmann group, $AG^{**}$-groupoid, paramedial quasigroup, affine representation.}

\subjclass[2010]{20N05, 20N02}
\date{\today}

\begin{abstract}
We describe an linear representation for Abel-Grassmann groups. As a consequence, we obtain or improve many previous results. In particular, enumeration of Abel-Grassmann groups up to isomorphism is obtained for orders $<512$.
\end{abstract}

\maketitle

\section{Introduction}\label{sec:intro}

By a \emph{groupoid} we mean an algebraic structure $(G,\cdot)$ with a single binary operation.
A groupoid is called \emph{Abel-Grassmann} (shortly, \emph{AG-groupoid}) if it satisfies the identity
$$(a\cdot b)\cdot c=(c\cdot b)\cdot a$$
for every $a,b,c\in G$. Groupoids satisfying an additional identity
$$a\cdot(b\cdot c)=b\cdot(a\cdot c)$$
for every $a,b,c\in G$ are called \emph{AG$^{**}$-groupoids}.
In a number of recent papers, e.g., \cite{DG1,DG2,KS,MK3,P1}, it has been demonstrated that the theory of AG$^{**}$-groupoids has a strong parallel to the theory of commutative semigroups, and this phenomenon seems to be the main motivation for their study. Some indication that this is not a coincidence can be found in \cite{ARS} and \cite[Chapter 3]{JK}, see our Section 4 for comments.

As a proof of concept, in this paper, we focus on a particular subclass of AG$^{**}$-groupoids, called \emph{Abel-Grassmann groups} (shortly, \emph{AG-groups})\footnote{In my opinion, the choice of name is somewhat unlucky, since these structures are not groups. But let me keep the established terminology.}. They are defined as groupoids $(G,\cdot)$ satisfying three axioms similar to the axioms of groups:
\begin{itemize}
	\item the Abel-Grassmann identity $(a\cdot b)\cdot c=(c\cdot b)\cdot a$ holds for every $a,b,c\in G$;
	\item there exists a \emph{left unit} $e$, i.e., an element satisfying $e\cdot a=a$ for every $a\in G$;
	\item for every $a\in G$ there exists a unique $a^{-1}$ such that $a\cdot a^{-1}=a^{-1}\cdot a=e$.
\end{itemize}
For example, every abelian group $A$ is an AG-group, and also $(A,*)$ with $a*b=b-a$ is an AG-group. 
We believe \cite{K,MK1,P2,SA,SGS} is the complete list of references to AG-groups (in earlier papers, they were called LA-groups). 

It has been noticed (e.g., in \cite{SA}) that every AG-groupoid is \emph{medial}, i.e., the identity $(ab)(cd)=(ac)(bd)$ holds for every $a,b,c,d$, and that every AG$^{**}$-groupoid is \emph{paramedial}, i.e., the identity $(ab)(cd)=(db)(ca)$ holds for every $a,b,c,d$ (paramediality is somewhat lesser known, see \cite{CJK} for an account on paramediality). Also, every AG-group is an AG$^{**}$-groupoid. Importantly, AG-groups are \emph{quasigroups} (latin squares), i.e., they possess unique left and right division. While this fact is implicit in \cite{SA}, we could not find a complete proof anywhere. 

The main result of our paper is the following characterization of Abel-Grassmann groups. 

\begin{theorem}\label{Th:main}
The following conditions are equivalent for a groupoid $(G,\cdot)$:
\begin{enumerate}
	\item $(G,\cdot)$ is an AG-group.
	\item $(G,\cdot)$ is an AG$^{**}$-quasigroup.
	\item $(G,\cdot)$ is a paramedial quasigroup with a left unit.
	\item There exists an abelian group $(G,+)$ and its automorphism $\varphi$ satisfying $\varphi^2=id$ such that $a\cdot b=\varphi(a)+b$ for every $a,b\in G$.
\end{enumerate}
\end{theorem}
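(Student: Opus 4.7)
The plan is to prove the four-way equivalence cyclically, $(4) \Rightarrow (1) \Rightarrow (2) \Rightarrow (3) \Rightarrow (4)$, with the main work in the last implication.

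For $(4) \Rightarrow (1)$, direct substitution: with $a \cdot b = \varphi(a) + b$ and $\varphi^2 = \mathrm{id}$, the AG identity reduces to $a + \varphi(b) + c = c + \varphi(b) + a$, which holds by commutativity of $+$; the element $0$ is a left unit and $-\varphi(a)$ is the two-sided inverse of $a$. For $(1) \Rightarrow (2)$, AG-groups are AG$^{**}$-groupoids by the introduction, so only the quasigroup property is at stake: $xa = b$ is solved by $x = ba^{-1}$ using AG and $aa^{-1} = e$, while $ax = b$ is solved by $x = (a^{-1}e) \cdot b$, since AG$^{**}$ rewrites $a \cdot ((a^{-1}e)b)$ as $(a^{-1}e)(ab)$ and mediality then collapses this to $(a^{-1}a)(eb) = b$; right cancellation is immediate from $(xa)a^{-1} = (a^{-1}a)x = x$, and left cancellation reduces to right cancellation via $(ax)e = (ex)a = xa$. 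For $(2) \Rightarrow (3)$, paramediality of AG$^{**}$-groupoids is again in the introduction, so only a global left unit needs attention: each $a$ has a unique local left identity $f(a)$ from the quasigroup, and solving $ub = a$ and applying AG$^{**}$ to $f(a) \cdot (ub) = ub$ gives $u \cdot (f(a)b) = ub$, whence $f(a)b = b$ by left cancellation, proving that $f$ is constant.

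The heart of the theorem is $(3) \Rightarrow (4)$. Starting from paramediality, a left unit $e$, and the quasigroup property, I first harvest three derived identities: putting $c = e$ in $(ab)(cd) = (db)(ca)$ yields the AG identity $(ab)d = (db)a$, from which mediality follows by the usual AG-groupoid argument; putting $c = d = e$ yields $(ab)e = ba$, so $R_e(x) := xe$ is an involution; and combining mediality with paramediality (two paramediality rewrites sandwiching one mediality rewrite) converts $a(bc)$ into $b(ac)$, giving AG$^{**}$. I then define $a + b := (ae) \cdot b$. Commutativity is the single paramediality $(ae)(eb) = (be)(ea)$; the element $e$ is a two-sided identity for $+$ (left by assumption, right by commutativity); and the quasigroup supplies inverses. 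For associativity, $R_e(xy) = yx$ gives $(a+b)+c = (b(ae)) \cdot c$, AG$^{**}$ converts $b(ae)$ into $a(be)$, and mediality identifies $(a(be)) \cdot c$ with $(ae) \cdot ((be)c) = a + (b+c)$. Finally $\varphi := R_e$ is an automorphism of $(G, +)$ with $\varphi^2 = \mathrm{id}$, and the identity $a \cdot b = \varphi(a) + b$ holds by construction.

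The main obstacle is $(3) \Rightarrow (4)$: several derived identities need to be assembled before the associativity computation for the new $+$ even makes sense, and this is essentially a miniature Toyoda-style representation theorem. Everything hinges on the observation that a left unit forces $R_e$ to be an involution, from which AG, mediality, and AG$^{**}$ all cascade as consequences of paramediality.
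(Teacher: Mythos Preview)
Your proof is correct and follows essentially the same route as the paper: the paper establishes the equivalence via Lemma~\ref{L:paramedial} (which supplies the AG, AG$^{**}$, paramediality, and left-unit implications you also derive) together with Theorem~\ref{Th:main_varieties}, using the identical definitions $a+b=(ae)\cdot b$ and $\varphi(a)=ae$, and your individual computations (associativity of $+$, constancy of $a\mapsto a/a$, the division formulas) differ from the paper's only in cosmetic rearrangement. The only organisational difference is that you close the cycle through $(3)\Rightarrow(4)$ whereas the paper passes through $(2)\Rightarrow(4)$, but since your first move in $(3)\Rightarrow(4)$ is to recover AG and AG$^{**}$ from paramediality plus the left unit, the two arguments coincide after that point.
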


In particular, condition (4) provides a complete classification of AG-groups in terms of abelian groups and their involutory automorphisms. (For the examples above, take $\varphi(a)=a$, and $\varphi(a)=-a$, respectively.) It can also be interpreted as a \emph{linear representation} for every AG-group over a module over the ring $\Z[x]/(x^2-1)$.
This fact has a number of consquences and easily explains virtually all the results on AG-groups in the papers listed above.

Let us note that Theorem \ref{Th:main} is not surprizing. By an \emph{affine representation} of a quasigroup $(G,\cdot)$ we mean an abelian group $(G,+)$, its automorphisms $\varphi,\psi$, and $c\in G$ such that $$a\cdot b=\varphi(a)+\psi(b)+c$$ for every $a,b\in G$.
Since AG-groups are medial quasigroups, the Toyoda-Bruck representation theorem \cite{T} applies, and we obtain an affine representation with an additional condition that $\varphi\psi=\psi\varphi$. 
Since AG-groups are paramedial quasigroups, the Kepka-N\v emec representation theorem \cite{KN} applies, and we obtain an affine representation with an additional condition that $\varphi^2=\psi^2$.
Existence of a left unit then implies $\varphi^2=\psi=id$.
Nevertheless, in our paper we present a direct proof, which allows a simpler (linear, with $c=0$) representation, and a bit more.

In fact, we prove a stronger version of Theorem \ref{Th:main} in Section 2: there is a \emph{term equivalence} between the varieties of AG-groups, AG$^{**}$-quasigroups, paramedial quasigroups with a left unit, and modules over the ring $\Z[x]/(x^2-1)$. It means, the four varieties are essentially identical, up to choice of the basic operations. Consequently, all properties determined by term operations translate straightforwardly from one setting to another. For example, in modules, congruences (i.e., kernels of homomorphisms) are determined by subalgebras, thus the same correspondence exists in all four settings. At the end of Section 2, we make a comparison to a correspondence developed in \cite{P2}.

In section 3, we enumerate AG-groups up to isomorphism, for every size $<512$, and every size $p_1^{d_1}\cdots p_k^{d_k}$ with all $d_i\leq2$. This dramatically improves results of \cite{SGS}, where enumeration was given up to size 11.

In section 4, we present a few remarks towards extending the results to other classes of AG$^{**}$-groupoids.

We will use the following notation to reduce the number of parentheses: for instance, $ab\cdot cd$ will stand for $(a\cdot b)\cdot(c\cdot d)$, and similarly for other expressions. All identities are meant to be universally quantified, unless stated otherwise.

\section{Equivalence of the four concepts}

We start with a few observations. Most of them can be traced in \cite{P2,SA}.

\begin{lemma}\label{L:paramedial}\ 
\begin{enumerate}
	\item An AG$^{**}$-quasigroup is paramedial, the mapping $f(a)=a/a$ is constant, and $e=a/a$ is a left unit.
	\item A paramedial groupoid with a left unit is an AG-groupoid.
	\item An AG-groupoid with a left unit is an AG$^{**}$-groupoid.
\end{enumerate}
\end{lemma}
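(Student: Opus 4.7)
The plan is to build everything on top of the medial law $(ab)(cd) = (ac)(bd)$, which follows from the AG identity alone by three applications: setting $x=ab,\,z=cd$ in $(xy)z=(zy)x$ gives $(ab)(cd) = ((cd)b)a$; then $(cd)b = (bd)c$; then $((bd)c)a = (ac)(bd)$. No left unit is required for this, and it will be reused in (3).

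For part (1), I first establish paramediality by the sandwich $(ab)(cd) = c((ab)d) = c((db)a) = (db)(ca)$, where the outer equalities are AG$^{**}$ (with $c$ pulled out and pushed back in) and the middle one is AG applied to $(ab)d$. Next, to extract a left unit from the AG$^{**}$-quasigroup, I fix any $b \in G$ and set $e := b/b$, which exists and is unique by the quasigroup property, so $eb = b$. For an arbitrary $a \in G$, right divisibility yields some $c$ with $cb = a$; then by AG$^{**}$ and the definition of $e$, $ea = e(cb) = c(eb) = cb = a$. So $e$ is a left unit. Finally, since $ea = a$ for every $a$, uniqueness of left division forces $a/a = e$, so $f$ is the constant map with value $e$.

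Parts (2) and (3) are one-line identity chases. For (2), paramediality with a left unit gives $(ab)c = (ab)(ec) = (cb)(ea) = (cb)a$. For (3), the medial law with a left unit gives $a(bc) = (ea)(bc) = (eb)(ac) = b(ac)$.

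The only genuinely delicate point is producing the left unit in (1): one must invoke the quasigroup property twice, once to define the candidate $e = b/b$ and once to write an arbitrary element as a right multiple of the chosen base $b$. Everything else reduces to applying the AG, AG$^{**}$, medial, and paramedial identities in the correct order, so the total length should be quite short.
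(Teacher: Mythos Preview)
Your proof is correct and follows essentially the same route as the paper: the paramediality computation in (1) and the identity chase in (2) are identical to the paper's, and your left-unit argument in (1) is the paper's argument with the roles of the two elements swapped (the paper shows $(a/a)\cdot b=b$ by writing $b=(b/a)a$ and applying AG$^{**}$, just as you write $a=cb$ and apply AG$^{**}$). The only cosmetic difference is in (3), where you invoke the medial law $(ea)(bc)=(eb)(ac)$ that you pre-derived from AG, whereas the paper unfolds the same step as a short chain of AG applications through $((bc)a)e=((ac)b)e$; one minor terminological slip: the uniqueness you invoke at the end of (1) is that of \emph{right} division $/$, not left.
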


\begin{proof}
(1) For paramediality, calculate $$ab\cdot cd=c \cdot(ab\cdot d)=c \cdot(db\cdot a)=db\cdot ca.$$ 
To prove that $a/a=b/b$ for every $a,b$, calculate
$$(a/a)\cdot b=(a/a)\cdot((b/a)a)=(b/a)\cdot((a/a)a)=(b/a)\cdot a=b$$
and divide by $b$ from right. In particular, we see that $e=a/a$ is a left unit.

(2) Let $e$ be a left unit. Then $$ab\cdot c=ab\cdot ec=cb\cdot ea=cb\cdot a.$$

(3) Let $e$ be a left unit again. Then $$a\cdot bc=ea\cdot bc=(bc\cdot a)e=(ac\cdot b)e=eb\cdot ac=b\cdot ac.$$
\end{proof}

As an immediate corollary, we obtain the equivalence $(2)\Leftrightarrow(3)$ of Theorem \ref{Th:main}.

From universal algebraic perspective, it is convenient to work with varieties. Therefore, we have to introduce symbols for all operations we want to perform. We will consider the following varieties.

\emph{The variety of AG-groups} is the variety of algebras $(G,\cdot,^{-1},e)$ satisfying the identities
$$ab\cdot c=cb\cdot a,\quad aa^{-1}=a^{-1}a=e,\quad ea=a.$$

\emph{The variety of AG$^{**}$-quasigroups} is the variety of algebras $(G,\cdot,/,\ld)$ satisfying the identities
$$ab\cdot c=cb\cdot a,\quad a\cdot bc=b\cdot ac,\quad a\cdot(a\ld b)=a\ld(a\cdot b)=(b/a)\cdot a=(b\cdot a)/a=b.$$

\emph{The variety of $\Z[x]/(x^2-1)$-modules} is the variety of algebras $(G,+,-,0,\varphi)$ such that $(G,+,-,0)$ is an abelian group and
$$\varphi(a+b)=\varphi(a)+\varphi(b),\quad \varphi(\varphi(a))=a.$$
(Clearly, the action of the ring $\Z[x]/(x^2-1)$ is given by the action of the monomial $x$, which in turn acts as an involutory automorphism $\varphi$.)

\begin{theorem}\label{Th:main_varieties}
The varieties of AG-groups, AG$^{**}$-quasigroups and $\Z[x]/(x^2-1)$-modules are term equivalent.
\end{theorem}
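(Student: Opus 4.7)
The plan is to follow the linear representation $a\cdot b=\varphi(a)+b$ suggested by condition (4) of Theorem \ref{Th:main} and exhibit explicit term translations in both directions, then verify that the induced operations satisfy the corresponding axioms and that the translations are mutually inverse.

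From a $\Z[x]/(x^2-1)$-module $(G,+,-,0,\varphi)$ I define $a\cdot b:=\varphi(a)+b$, $e:=0$, $a^{-1}:=-\varphi(a)$, together with the divisions $a\ld b:=b-\varphi(a)$ and $b/a:=\varphi(b)-\varphi(a)$. Routine unpacking using the abelian group axioms and $\varphi^2=\mathrm{id}$ verifies both the AG-group axioms and the AG$^{**}$-quasigroup axioms. In the reverse direction, substituting $b=e$ into the target identity forces $\varphi(a)=ae$, so from an AG-group $(G,\cdot,^{-1},e)$ I set
$$\varphi(a):=ae,\qquad a+b:=ae\cdot b,\qquad 0:=e,\qquad -a:=(ae)^{-1}.$$

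I first record the preparatory identities $ee=e$ and (by AG with the left unit) $(ab)e=(eb)\cdot a=ba$; in particular $(ae)e=a$, so $\varphi$ is an involution. Paramediality from Lemma \ref{L:paramedial} gives $\varphi(a)\varphi(b)=(ae)(be)=ba=\varphi(ab)$, so $\varphi$ is an automorphism of $(G,\cdot)$. Then: commutativity $a+b=ae\cdot b=be\cdot a=b+a$ is AG; $0+a=a$ is the left unit axiom and $a+0=ae\cdot e=(ee)a=a$ is AG; and inverses come directly from the AG-group axiom.

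The main obstacle is associativity of $+$. I expand
$$(a+b)+c=(ae\cdot b)e\cdot c=(b\cdot ae)\cdot c=(c\cdot ae)\cdot b=(c\cdot ae)(eb),$$
using AG twice and then the left unit, and then mediality $(xy)(zw)=(xz)(yw)$ (valid in every AG-groupoid) rewrites this as $(ce)(ae\cdot b)=ce\cdot(ae\cdot b)$. A short chain alternating AG on the inner products with AG$^{**}$ to move factors out now yields
$$ce\cdot(ae\cdot b)=ce\cdot(be\cdot a)=be\cdot(ce\cdot a)=be\cdot(ae\cdot c)=ae\cdot(be\cdot c)=a+(b+c).$$
Additivity of $\varphi$ follows from $\varphi(a+b)=(ae\cdot b)e=b\cdot ae=a\cdot be=\varphi(a)+\varphi(b)$, where the middle equality is AG$^{**}$ and the outer ones use $\varphi(a)e=a$.

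Finally, both round trips between the structures are immediate from $(ae)e=a$ and $\varphi^2=\mathrm{id}$: module-to-AG-group-to-module recovers $\varphi$ via $a\cdot 0=\varphi(a)+0=\varphi(a)$ and $+$ via $ae\cdot b=\varphi^2(a)+b=a+b$; AG-group-to-module-to-AG-group recovers $\cdot$ via $\varphi(a)+b=(ae)e\cdot b=a\cdot b$. The AG$^{**}$-quasigroup case is handled by composing with the AG-group translation (using $e:=a/a$ and $a^{-1}:=a\ld e$, justified by Lemma \ref{L:paramedial}(1)), or equivalently by introducing the divisions $a\ld b:=be\cdot a^{-1}$ and $a/b:=a\cdot b^{-1}$ directly from the AG-group operations and checking the four division identities with the usual AG$^{**}$ rewrites.
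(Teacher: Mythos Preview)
Your proof is correct and uses the same core translations as the paper: $\varphi(a)=ae$, $a+b=ae\cdot b$, $a\cdot b=\varphi(a)+b$, with the same verification strategy. The paper organizes the argument as a three-step cycle (AG-groups $\to$ AG$^{**}$-quasigroups $\to$ modules $\to$ AG-groups), whereas you establish the module $\leftrightarrow$ AG-group equivalence directly and then fold in the AG$^{**}$-quasigroup case; your explicit verification that the two translations are mutually inverse is a point the paper leaves implicit, and your associativity chain is a bit longer (detouring through mediality) than the paper's four-line computation $ae\cdot(be\cdot c)=ae\cdot(ce\cdot b)=ce\cdot(ae\cdot b)=(ae\cdot b)e\cdot c$, but none of this is a substantive difference.

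One small imprecision: when you invoke ``paramediality from Lemma \ref{L:paramedial}'' while working inside an AG-group, note that Lemma \ref{L:paramedial}(1) is stated for AG$^{**}$-quasigroups; you should first cite Lemma \ref{L:paramedial}(3) to obtain the AG$^{**}$ law (the paramediality computation itself uses only AG and AG$^{**}$, not division). Similarly, your claim that $a^{-1}:=a\backslash e$ satisfies $a^{-1}\cdot a=e$ in an AG$^{**}$-quasigroup deserves one line of justification (e.g., from $a\cdot a^{-1}=e$ and $(xy)e=yx$ one gets $(a^{-1}a)e=e$, hence $a^{-1}a=((a^{-1}a)e)e=ee=e$).
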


\begin{proof}
First, we translate AG-groups to AG$^{**}$-quasigroups. The identity $a\cdot bc=b\cdot ac$ was proved in Lemma \ref{L:paramedial}(3). It remains to show that AG-groups have term definable unique left and right division. We start with uniqueness of right division. If $xa=b$, then $xa\cdot a^{-1}=ba^{-1}$, and since 
$xa\cdot a^{-1}=a^{-1}a\cdot x=ex=x$, we necessarily have $x=ba^{-1}$. Now it is easy to check that this really is a solution: $ba^{-1}\cdot a=aa^{-1}\cdot b=eb=b$. Hence, we have a term definable unique right division, namely $$b/a=ba^{-1}$$
(see also \cite[Lemma 2.3]{P2} for this observation).
Now we solve $ax=b$. Since $ax=ea\cdot x=xa\cdot e$, we have $ax=b$ iff $xa\cdot e=b$ iff $x=(b/e)/a=be^{-1}\cdot a^{-1}=be\cdot a^{-1}$. Hence, we have a term definable unique left division, namely $$a\ld b=be\cdot a^{-1}.$$

In the second step, we translate AG$^{**}$-quasigroups to $\Z[x]/(x^2-1)$-modules. First recall that $e=a/a$ is a term definable left unit (independent of the choice of $a$), see Lemma \ref{L:paramedial}(1). Let $$a+b=ae\cdot b,\quad -a=(ae)\ld e,\quad 0=e, \quad \varphi(a)=ae$$
(the definition of the group operations can be found in \cite[Theorem 2.9]{ARS} and \cite[Theorem 2.4]{P2}).
To check $(G,+,-,0)$ is an abelian group, calculate $a+b=ae\cdot b=be\cdot a=b+a$, $0+a=ee\cdot a=ea=a$, $a+(-a)=ae\cdot(ae)\ld e=e=0$, and 
$$a+(b+c)=ae\cdot(be\cdot c)=ae\cdot(ce\cdot b)=ce\cdot(ae\cdot b)=(ae\cdot b)e\cdot c=(a+b)+c.$$
To check that $\varphi$ is an involutory automorphism, calculate $\varphi^2(a)=ae\cdot e=ee\cdot a=ea=a$, and finally, 
$$\varphi(a+b)=(ae\cdot b)e=eb\cdot ae=b\cdot ae=(be\cdot e)\cdot ae=\varphi(b)+\varphi(a)=\varphi(a)+\varphi(b).$$

In the last step, we translate $\Z[x]/(x^2-1)$-modules to AG-groups. Let $$a\cdot b=\varphi(a)+b,\quad a^{-1}=\varphi(-a),\quad e=0.$$
For the Abel-Grassman identity, $$ab\cdot c=\varphi(\varphi(a)+b)+c=a+\varphi(b)+c=\varphi(\varphi(c)+b)+a=cb\cdot a,$$
the inverse properties follow from
$$a\cdot a^{-1}=\varphi(a)+\varphi(-a)=0=e, \quad a^{-1}\cdot a=\varphi(\varphi(-a))+a=0=e,$$
and $e$ is a left unit since $ea=\varphi(0)+a=a$.
\end{proof}

Theorem \ref{Th:main} is an immediate consequence of Lemma \ref{L:paramedial} and Theorem \ref{Th:main_varieties}. However, Theorem \ref{Th:main_varieties} implies a stronger connection between the four concepts: any property or feature that only depends on term operations is equivalent in all four settings (for instance, subalgebras, congruences, homomorphisms, and their properties). In particular, it allows to translate many properties of modules to the AG-group setting.

Naturally, a subset $H$ of an AG-group $G$ is a \emph{sub-AG-group} if $a\cdot b\in H$, $a^{-1}\in H$ and $e\in H$ for every $a,b\in H$. Using Theorem \ref{Th:main_varieties}, $H$ is a sub-AG-group if and only if $H$ is a submodule of the corresponding $\Z[x]/(x^2-1)$-module, i.e., if and only if $a+b=ae\cdot b\in H$, $-a=(ae)\ld e=ee\cdot(ae)^{-1}=(ae)^{-1}=a^{-1}e\in H$, $0=e\in H$ and $\varphi(a)=ae\in H$ for every $a,b\in H$. Hence, for example, all finite AG-groups satisfy the \emph{Lagrange property} (if $B$ is a subalgebra of $A$, then $|B|$ divides $|A|$), because all finite modules do (see \cite{MK1} for a different argument).

Similarly for congruences. In modules, subalgebras and ideals (i.e., kernels of homomorphisms) is the same thing. Therefore, so it is in AG-groups. Every congruence $\alpha$ of an AG-group $G$ is uniquely determined by its block $e/\alpha$ containing the left unit $e$, and this block forms a subalgebra, i.e., a sub-AG-group.
Conversely, every sub-AG-group $H$ of $G$ determines a unique congruence defined by $a\sim b$ iff $a-b\in H$.
Since $a-b=a+(-b)=ae\cdot b^{-1}e=b^{-1}a$ (using paramediality), a sub-AG-group $H$ determines the congruence defined by $$a\sim b\quad\Leftrightarrow\quad b^{-1}a\in H.$$
In particular, all AG-groups have \emph{ideal-determined congruences} (congruences are uniquely determined by their block containing the left unit), and thus are \emph{congruence permutable} ($\alpha\vee\beta=\alpha\circ\beta=\beta\circ\alpha$ for any pair of congruences) and thus also \emph{congruence modular} (see \cite{GU} for more information on ideal-determined varieties). 

Let us note that a recent paper by Proti\'c \cite[Theorem 3.4]{P2} contains a more complicated description of the correspondence between subalgebras and congruences. He considers \emph{normal sub-AG-groups} as subsets $H\subseteq G$ closed with respect to all three AG-group operations and such that $u\cdot au^{-1}\in H$ for every $a\in H$ and $u\in G$. However, it is easy to see that every sub-AG-group is normal in Proti\'c's sense: we have $u\cdot au^{-1}=a\cdot uu^{-1}=ae\in H$, because both $a,e\in H$.

\section{Enumeration}

In \cite{SGS}, the enumeration of AG-groups of size $\leq11$ was presented, using a brute force search with a few non-trivial symmetry-breaking heuristics. Here we show how the algebraic theory can be used to obtain enumeration of much larger scale.

Let $(G,+)$ be an abelian group and $\varphi$ its involutory automorphism. The AG-group $(G,\cdot)$ with $x\cdot y=\varphi(x)+y$ will be denoted $\AG(G,\varphi)$. Theorem \ref{Th:main} says that every AG-group admits such a representation.
The first step in our enumeration is an isomorphism theorem.

\begin{proposition}\label{prop:iso}
Let $G,H$ be two abelian groups and $\varphi,\psi$ their involutory automorphisms, respectively. A mapping $f:G\to H$ is an isomorphism $\AG(G,\varphi)\simeq \AG(H,\psi)$ if and only if $f$ is a group isomorphism $G\simeq H$ and $\psi=f\varphi f^{-1}$.
\end{proposition}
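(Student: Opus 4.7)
The plan is to leverage the term equivalence established in Theorem \ref{Th:main_varieties}: since the abelian group structure $(G,+)$ and the automorphism $\varphi$ are term-definable from the AG-group operation on $\AG(G,\varphi)$ (via $a+b=(a\cdot e)\cdot b$, $\varphi(a)=a\cdot e$, $0=e$), any AG-group isomorphism is automatically a $\Z[x]/(x^2-1)$-module isomorphism, and vice versa. With that in hand, the proposition becomes essentially a restatement, and I would verify the two directions by direct computation.

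\medskip

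For the forward direction, I would start by noting that an AG-group isomorphism $f\colon \AG(G,\varphi)\to \AG(H,\psi)$ must send the left unit $e_G=0$ to $e_H=0$ (either by the signature containing $e$, or by uniqueness of the left unit: $f(0)\cdot f(a)=f(0\cdot a)=f(a)$ for all $a$). Applying $f$ to the term expressions for $+$ and $\varphi$, I would then obtain
\[
f(a+b)=f((a\cdot e_G)\cdot b)=(f(a)\cdot e_H)\cdot f(b)=f(a)+f(b),
\]
so $f$ is an abelian group isomorphism $(G,+)\to(H,+)$, and similarly
\[
f(\varphi(a))=f(a\cdot e_G)=f(a)\cdot e_H=\psi(f(a)),
\]
which gives $f\varphi=\psi f$, i.e.\ $\psi=f\varphi f^{-1}$.

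\medskip

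For the reverse direction, suppose $f\colon(G,+)\to(H,+)$ is a group isomorphism satisfying $\psi=f\varphi f^{-1}$. Then
\[
f(a\cdot b)=f(\varphi(a)+b)=f(\varphi(a))+f(b)=\psi(f(a))+f(b)=f(a)\cdot f(b),
\]
so $f$ preserves the AG-group multiplication, and being a bijection, it is an isomorphism $\AG(G,\varphi)\simeq\AG(H,\psi)$.

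\medskip

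There is essentially no obstacle here: the content is that the term equivalence of Theorem \ref{Th:main_varieties} forces AG-group isomorphisms and module isomorphisms to coincide, and after that, everything reduces to one-line unfoldings of definitions. The only small point requiring care is that in the forward direction one uses $f(e_G)=e_H$ (which follows from uniqueness of the left unit), so that the term-definitions of $+$ and $\varphi$ transfer cleanly through $f$.
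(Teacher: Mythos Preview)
Your proof is correct and takes essentially the same approach as the paper: both invoke the term equivalence of Theorem~\ref{Th:main_varieties} to conclude that AG-group homomorphisms coincide with $\Z[x]/(x^2-1)$-module homomorphisms, from which the statement is immediate. You simply spell out the one-line computations that the paper leaves implicit.
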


\begin{proof}
It follows from Theorem \ref{Th:main_varieties} that a mapping $f:G\to H$ is a homomorphism with respect to AG-group operations, i.e., $f(a\cdot b)=f(a)\cdot f(b)$, $f(a^{-1})=f(a)^{-1}$ and $f(e)=e$, if and only if it is a homomorphism with respect to the corresponding $\Z[x]/(x^2-1)$-module operations, i.e., $f(a+b)=f(a)+f(b)$ and $f(\varphi(a))=\psi(f(a))$. The statement follows immediately.
%
\end{proof}

Let $a(n)$ denote the number of isomorphism classes of AG-groups with $n$ elements. According to Proposition \ref{prop:iso}, 
$a(n)$ equals the sum of the numbers of involutory automorphisms up to conjugacy, where the sum runs over all abelian groups of order $n$ up to isomorphism.
If $m$ and $n$ are coprime, the classification of finite abelian groups implies that $a(mn)=a(m)a(n)$.
Therefore, we can focus on $a(p^d)$ for prime powers $p^d$.
We start with the enumeration of AG-groups of prime order or prime squared order. 

\begin{proposition}\label{prop:enumeration} 
If $p>2$ is a prime, then $a(p)=2$ and $a(p^2)=5$. For $p=2$, $a(2)=1$ and $a(4)=4$.
\end{proposition}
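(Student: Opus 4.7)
The plan is to apply Proposition \ref{prop:iso}, which reduces $a(n)$ to the sum, over abelian groups $(G,+)$ of order $n$, of the number of conjugacy classes of involutory automorphisms in $\aut{G}$ (here ``involutory'' includes the identity, since we only require $\varphi^2=id$). So for each $n\in\{p,p^2,2,4\}$ I would list the abelian groups of order $n$ and, for each one, count the conjugacy classes of elements $\varphi\in\aut{G}$ satisfying $\varphi^2=id$.

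For $n=p$ prime, the only abelian group is $\Z/p\Z$, with automorphism group $(\Z/p\Z)^*$, cyclic of order $p-1$. This group is abelian, so conjugacy is trivial, and I just count the solutions of $x^2=1$: for $p>2$ there are two ($\pm1$), giving $a(p)=2$; for $p=2$ the group is trivial, giving $a(2)=1$. For $n=p^2$, the abelian groups are $\Z/p^2\Z$ and $(\Z/p\Z)^2$. The automorphism group of $\Z/p^2\Z$ is $(\Z/p^2\Z)^*$, which is cyclic (for $p$ odd) or $\{\pm1\}$ (for $p=2$); in either case abelian and containing exactly two involutions $\pm1$, contributing $2$ to $a(p^2)$.

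The essential case is $\aut{(\Z/p\Z)^2}=GL_2(\F_p)$. I would count conjugacy classes of matrices $M$ with $M^2=I$ by looking at the minimal polynomial, which divides $x^2-1$. For $p$ odd, $x^2-1=(x-1)(x+1)$ splits into distinct linear factors, so $M$ is diagonalizable with eigenvalues in $\{\pm1\}$; the three possibilities (both $+1$, both $-1$, one of each) give three $GL_2$-conjugacy classes — the two central matrices $I$ and $-I$, plus the single class of reflections (all conjugate since they share the same rational canonical form $\mathrm{diag}(1,-1)$). Adding $2+3$ gives $a(p^2)=5$. For $p=2$, $x^2-1=(x-1)^2$ has a repeated root, so $M^2=I$ forces $(M-I)^2=0$; the possibilities are $M=I$ or $M$ a nontrivial unipotent, and the nontrivial unipotents form a single conjugacy class (they are exactly the transpositions under $GL_2(\F_2)\cong S_3$), contributing $2$ classes and giving $a(4)=2+2=4$.

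The only real obstacle is making the $GL_2(\F_p)$ analysis tidy: I would phrase it uniformly by diagonalizing over the splitting field, but would need to separate the $p=2$ case because $x^2-1$ is no longer separable, producing a unipotent class instead of a $-I$ class. Everything else is bookkeeping from Proposition \ref{prop:iso} and the classification of abelian groups of order $p^2$.
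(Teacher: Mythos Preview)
Your proposal is correct and follows essentially the same approach as the paper: reduce via Proposition \ref{prop:iso} to counting conjugacy classes of involutory automorphisms, handle the cyclic groups $\Z_p$ and $\Z_{p^2}$ by counting square roots of $1$ in the (abelian) unit group, and handle $(\Z_p)^2$ via linear algebra over $\F_p$. The only cosmetic difference is that for $GL_2(\F_p)$ with $p$ odd you invoke separability of $x^2-1$ to diagonalize directly (yielding the three classes $I$, $-I$, $\mathrm{diag}(1,-1)$), whereas the paper lists all five Jordan forms with eigenvalues $\pm1$ and then discards the two non-semisimple ones; your phrasing is marginally cleaner but the substance is identical.
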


\begin{proof}
First consider the prime size. In this case any AG-group is isomorphic to $\AG(\mathbb Z_p,k)$ where $k\in\mathbb Z_p^*$ satisfies $k^2\equiv1\pmod p$. Since $\mathbb Z_p^*$ is cyclic, there is only one element of order 2, hence the only solutions are $k=1$ and $k=p-1$. If $p=2$, this is only one solution.

For prime squared, there are two possibilities. If $G=\mathbb Z_{p^2}$, we proceed similarly. If $k^2\equiv1\pmod{p^2}$, that is, $p^2\mid k^2-1=(k-1)(k+1)$,
then either $k=1$, or $k=p^2-1$, or $p$ divides both $k-1$, $k+1$, which is impossible unless $p=2$. In any case, we obtain two solutions.

Let $G=(\mathbb Z_p)^2$. Its automorphism group is $GL(2,\mathbb F_p)$, hence we need to determine the number of conjugacy classes of matrices $A$ satisfying $A^2=I$. Since the polynomial $f=x^2-1$ splits over $\mathbb F_p$, such matrices are determined by their Jordan normal form. Now, the key observation is that if a matrix $A$ satisfies $f(A)=0$, then every eigenvalue of $A$ is a root of $f$. Hence, for $p = 2$, there are two possible Jordan forms
\[
  \begin{pmatrix}
    1 & 0 \\
     0 & 1 \\
  \end{pmatrix},\quad
  \begin{pmatrix}
    1 & 1 \\
     0 & 1 \\
  \end{pmatrix}.
\]
Both matrices satisfy $A^2=I$, hence $a(4)=2+2=4$. For $p>2$, there are five possible Jordan forms
\[
  \begin{pmatrix}
    1 & 0 \\
     0 & 1 \\
  \end{pmatrix},\quad
  \begin{pmatrix}
    -1 & 0 \\
     0 & -1 \\
  \end{pmatrix},\quad
  \begin{pmatrix}
    1 & 0 \\
     0 & -1 \\
  \end{pmatrix},\quad
  \begin{pmatrix}
    1 & 1 \\
     0 & 1 \\
  \end{pmatrix},\quad
  \begin{pmatrix}
    -1 & 1 \\
     0 & -1 \\
  \end{pmatrix}.
\]
The former three matrices satisfy $A^2=I$, while the latter two matrices do not. Hence $a(p^2)=2+3=5$.
\end{proof}

\begin{table} 
\begin{center}
\begin{tabular}{|c|cccccccc|}\hline
$d$      & 1 & 2 & 3  & 4  & 5  & 6 & 7 & 8\\\hline
$a(2^d)$ & 1 & 4 & 10 & 29 & 69 & 187 & 449 & 1141  \\
$a(3^d)$ & 2 & 5 & 10 & 20 & 36 & 65 && \\
$a(5^d)$ & 2 & 5 & 10 & 20 &&&&  \\
$a(7^d)$ & 2 & 5 & 10 & &&&&  \\
$a(p^d)$ & 2 & 5 &&&&&&  \\\hline
\end{tabular}
\end{center}
\caption{Enumeration of AG-groups.}\label{tab:enumeration}
\end{table} 

Further values of $a(p^d)$ can be evaluated in GAP \cite{GAP} by a straightforward calculation using Proposition \ref{prop:iso}. The values are summarized in Table \ref{tab:enumeration}.

\section{Beyond quasigroups?}

Theorem \ref{Th:main} translates all questions about AG-groups into questions about abelian groups and their involutory automorphisms (or, about $\Z[x]/(x^2-1)$-modules), and explains why properties of AG-groups resemble those of abelian groups --- recall the comment at the end of the first paragraph of the paper.
(It also shows that the claim in \cite{SA} that AG-groups are ``midway between quasigroups and abelian groups'' is not quite precise.)

A natural question for further research is, what about other classes of AG$^{**}$-groupoids? In particular, we have in mind the class of completely inverse AG$^{**}$-groupoids studied in detail by Dudek and Gigo\'n \cite{DG1,DG2}. For instance, their characterization of congruences is very similar to the one given in the theory of inverse semigroups. Is there an equivalence with some variety of inverse semigroups with operators? Perhaps, there is a nice semilinear representation, where the semimodule is built over a commutative inverse semigroup.

Some hints can be found in literature. In \cite[Section 3]{JK} and a few later papers, various kinds of (semi-)linear and (semi-)affine representations of medial groupoids are established. Perhaps some of the ideas can be exploited in the setting of completely inverse AG$^{**}$-groupoids. See also \cite[Theorems 2.7--2.10]{ARS} for an attempt to establish such a connection.


\begin{thebibliography}{99}


\bibitem{ARS}
I. Ahmad, M. Rashad, M. Shah, \emph{Constructions of some algebraic structures from each other.} Int. Math. Forum 7, No. 53-56, 2759-2766 (2012).

\bibitem{CJK}
J. R. Cho, J. Je\v zek, T. Kepka, \emph{Paramedial groupoids.} Czech. Math. J. 49, No.2, 277--290 (1999).

\bibitem{DG1}
W. Dudek, R. Gigo\'n, \emph{Congruences on completely inverse AG$^{**}$-groupoids}, Quasigroups Relat. Syst. 20, No. 2, 203--209 (2012).

\bibitem{DG2}
W. Dudek, R. Gigo\'n, \emph{Completely inverse AG$^{**}$-groupoids}, Semigroup Forum 87, No. 1, 201--229 (2013).

\bibitem{GAP}
The GAP Group, \emph{GAP -- Groups, Algorithms, and Programming, Version 4.5.7}; 2012. \texttt{(http://www.gap-system.org)}

\bibitem{GU}
H. P. Gumm, A. Ursini, \emph{Ideals in universal algebras}, Algebra Universalis 19 (1984), 45--54.

\bibitem{JK}
J. Je\v zek, T. Kepka. \emph{Medial groupoids.} Rozpravy \v CSAV, Rada mat. a p\v rir. v\v ed 93/2, 1983, 93 pp.

\bibitem{K}
M. S. Kamran, \emph{Conditions for LA-semigroups to resemble associative structures}, Ph.D. Thesis, Quaid-i-Azam Universiti, Islamabad, 1993.

\bibitem{KN}
P. N\v emec, T. Kepka, \emph{T-quasigroups I} , Acta Univ. Carolin. Math. Phys. 12 (1971), no. 1, 39--49.

\bibitem{KS}
M. Khan, S. Anis, \emph{An analogy of Clifford decomposition theorem for Abel-Grassmann groupoids.} Algebra Colloq. 21 (2014), No. 2, 347-353.

\bibitem{MK1}
Q. Mushtaq, M.S. Kamran, \emph{On left almost groups}, Proc. Pak. Acad. of Sciences, 33 (1996), 1--2.


\bibitem{MK3}
Q. Mushtaq, M. Khan, \emph{Semilattice decomposition of locally associative AG$^{**}$-groupoids}, Algebra Colloq. 16 (2009), 17--22.

\bibitem{P1}
P. Proti\'c, \emph{Congruencies on an inverse AG**-groupoid via the natural partial order.} Quasigroups Relat. Syst. 17 (2009), No. 2, 283-290.

\bibitem{P2}
P. Proti\'c, \emph{Some remarks on Abel-Grassmann's groups}, Quasigroups and Related Systems 20 (2012), 267--274.


\bibitem{SA}
M. Shah, A. Ali, \emph{Some structural properties of AG-groups.} Int. Math. Forum 6 (2011), No. 33-36, 1661-1667.

\bibitem{SGS}
M. Shah, C. Gretton, V. Sorge, \emph{Enumerating AG-groups with a study of Smaradache AG-groups.} Int. Math. Forum 6 (2011), No. 61-64, 3079-3086.

\bibitem{T}
K. Toyoda, \emph{On axioms of linear functions.} Proc. Imp. Acad. Tokyo 17 (1941), 221--227.

\end{thebibliography}
\end{document}